\def\qed{\hfill {\hbox{${\vcenter{\vbox{               
   \hrule height 0.4pt\hbox{\vrule width 0.4pt height 6pt
   \kern5pt\vrule width 0.4pt}\hrule height 0.4pt}}}$}}}
\newtheorem{theorem}{Theorem}
\newtheorem{proposition}[theorem]{Proposition}
\newtheorem{corollary}[theorem]{Corollary}
\theoremstyle{definition}
\newtheorem{example}{Example}
\newtheorem{definition}{Definition}
\newtheorem{remark}{Remark}
\date{}
\title{\Large \textbf{The Forbidden Quiver of a Link}}
\author{Sam Nelson\footnote{Email: Sam.Nelson@cmc.edu. Partially supported by Simons Foundation collaboration grant 702597}\and
Stella Shah\footnote{Email: sshah0159@scrippscollege.edu}}
\begin{document}
\maketitle

\begin{abstract}
The forbidden moves in virtual knot theory can be used to unknot any knot, 
virtual or classical; however, multi-component crossings in links can still 
survive, resulting a \textit{fused link}. Using the forbidden moves, we 
categorify fused links obtain a quiver-valued invariant of classical 
and virtual links we call the \textit{forbidden quiver}, opening the way for
functors to and from other categories. As an application we use the forbidden
quiver to obtain three polynomial invariants of virtual and classical links. 
Since these invariants are not sensitive to single-component 
crossing change, they are also link homotopy invariants. 
\end{abstract}

\parbox{5.5in} {\textsc{Keywords:} classical and virtual links, forbidden moves,
quandles, quivers, link invariants, Gauss diagrams, link homotopy, fused links, categorification

\smallskip

\textsc{2020 MSC:} 57K12}

\section{\large\textbf{Introduction}}\label{I}

In \cite{GPV} the \textit{forbidden moves} on signed Gauss diagrams
representing oriented virtual knots and links were introduced and shown
to be unknotting moves. In \cite{N}, two additional moves were constructed
using the forbidden moves which allow arbitrary movement of arrow heads
and tails of either crossing sign past each other. In particular, 
single-component crossings can be removed with these moves together with
Reidemeister I moves, resulting in unknotting. See also \cite{K,O}.

However, unknotting is not necessarily unlinking -- while some multi-component
crossings can be removed via forbidden moves and Reidemeister II moves, in 
general some multi-component crossings remain. In \cite{FK,NT} etc., virtual
links considered up to forbidden moves are known as \textit{fused links}
and are shown to be classified by their virtual linking numbers.
 In context of knot invariants
defined via quivers such as \cite{CN,N} etc, we characterize the fused link
class of classical and virtual links using a finite quiver with signed
edges. We call the result the \textit{forbidden quiver of the link}. Since
quivers are small categories, this construction provides a categorification of
fused links.
\[\scalebox{0.8}{\includegraphics{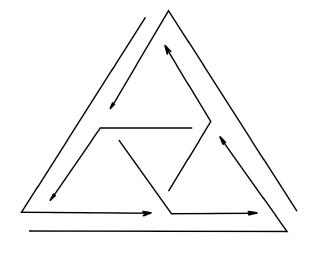}}
\ \quad\includegraphics{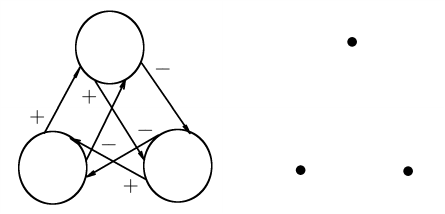}\]
The isomorphism class of this forbidden quiver is an invariant of both ambient 
isotopy and link homotopy and can be used to define several simpler 
invariants as decategorifications. 

The paper is organized as follows. In Section \ref{RB} we review the basics 
of Gauss diagrams and forbidden moves. In Section \ref{BB} we introduce the
forbidden quiver and the associated forbidden polynomial link invariants. We 
prove a result about which quivers can be obtained as the forbidden 
quiver of a virtual link and collect some example computations including
computing the polynomial decategorification polynomials for all classical
links with up to 7 crossings. We conclude in Section \ref{Q} with questions 
for future research.

This paper, including all text, illustrations, and python code for 
computations, was written strictly by the authors without the use of 
generative AI in any form.

\section{\large\textbf{Gauss Diagrams, Forbidden Moves and Quivers}}\label{RB}

In this Section we review the basics needed for the rest of the paper, 
including Gauss diagrams, forbidden moves and quivers. See \cite{CN, EN} 
for more.

Signed Gauss diagrams are combinatorial structures used to represent a knot or 
link diagram through an encoding of its crossing information. The signed 
Gauss code of an oriented knot or link $L$ is determined by labeling each 
crossing with a label and local writhe number
\[\includegraphics{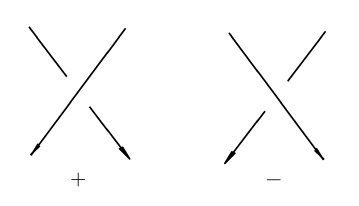}.\]
Then for each individual component of the link, we choose an initial starting 
point, and in the direction of the orientation we list the order in which 
the crossings are encountered and whether they were an over- or 
under-crossing. This list of of lists of crossings with signs is 
a \textit{signed  Gauss code}. 
In a \textit{Gauss diagram}, each link component is represented by a circle, 
labeled counter-clockwise with elements of the Gauss code in order. 
Each crossing on $L$ has a corresponding over- and under-crossing label 
on its Gauss diagram. For each of these crossings, an arrow is drawn from 
the label on the diagram representing the over-crossing to the label 
representing the corresponding under-crossing. Finally, each crossing arrow
is decorated with a ``$+$'' or ``$-$'' to show the sign of the crossing.
The figure eight knot has Gauss diagram
\[\includegraphics{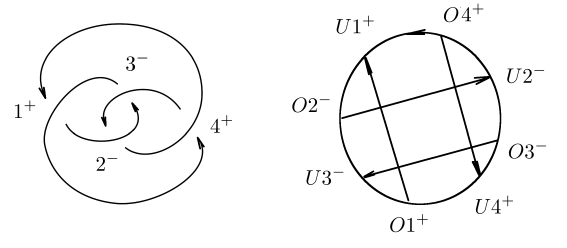}\]
while the Borromean rings have Gauss diagram
\[\includegraphics{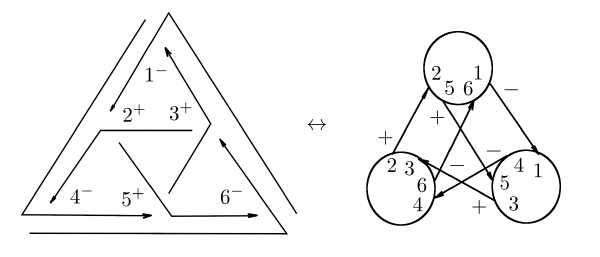}.\]

Given a Gauss diagram for a knot $L$, there is a set of permitted moves 
on the arrows of the Gauss diagram that encode the same knot $L$ up to 
Reidemeister moves. For each of the Reidemeister moves
\[\includegraphics{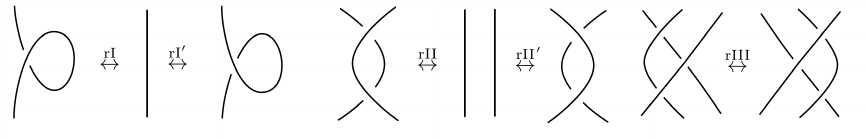}\]
there is a corresponding 
change made to the Gauss diagram reflecting this. The Reidemeister I move R1 
can be understood as the creation or deletion of a kink which does not 
interact with any other arc of the knot. Following along an arc of a knot, 
along an R1 kink is a crossing which has for one arc, no other intersections 
between the over and under crossing. Thus, the Gauss diagram representation 
of R1 consists of deleting or adding an arrow which does not intersect with 
any other arrow. 
\[\includegraphics{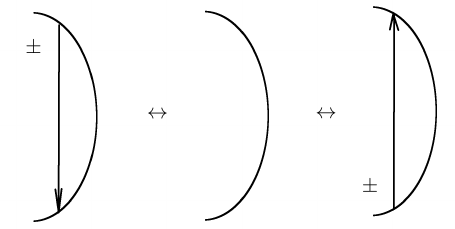}\]

Similarly, the Reidemeister II move R2 describes the action of, given two 
strands, bending one over 
the other, then back to its original relative position. Following along the 
arc of the knot, we find that an R2 move is the creation of two crossings: 
First, one strand crosses over another, then, it crosses back over with 
another crossing of the opposite sign. This move, on the Gauss diagram, 
corresponds to the creation or deletion of two arrows facing the same 
direction, with opposite signs and no other arrow with its head or tail in 
between the heads and the tails of the new arrows. 
\[\includegraphics{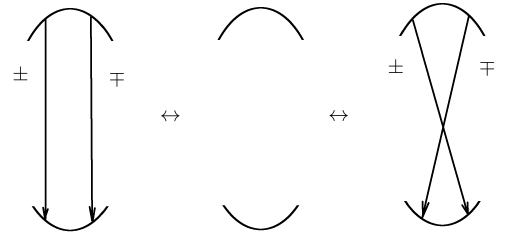}\]

A Reidemeister III move R3 is the action of moving a strand over or under 
an existing crossing between two other strands. This move consists of changing 
the order in which three crossings occur. The Gauss diagram representation of 
this is moving the head and tail of an arrow past the head and tail of two 
non-intersecting arrows with nothing in between their other tail and head. 
\[\includegraphics{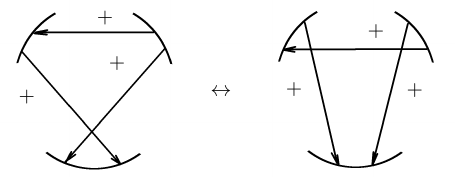}\]

Gauss diagrams modulo permitted moves are invariants 
of oriented knots and link up to ambient isotopy.
\textit{Virtual knots} are Reidemeister equivalence classes of Gauss diagrams
which might not have planar knot or link diagrams. These are usually drawn
with \textit{virtual crossings} representing genus in the surface on which 
the knot or link diagram is drawn; virtual knots may be regarded as equivalence
classes of knots and links in thickened surfaces modulo stabilization. 
See \cite{EN,K} for more.

In addition, permitted virtual moves vI, vII, vIII and v are analogues of the 
Reidemeister moves using virtual crossings, though they do not alter their 
corresponding Gauss diagram. These moves consist of creating or deleting 
a virtual crossing in a kink, crossing one strand past another through two 
virtual crossings, and moving a crossing, either classical or virtual, past 
a strand through two virtual crossings. 
\[\includegraphics{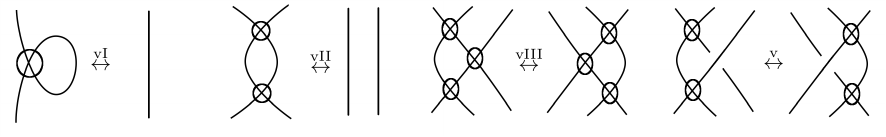}.\]

Gauss diagrams modulo permitted moves and permitted virtual moves are 
invariants of virtual knots up to isotopy. However, these permitted moves 
only describe a portion of potential actions over a Gauss diagram. 

The \textit{forbidden moves} are plausible-looking actions on a Gauss diagram 
which alter the structure of a knot (and hence are forbidden if we wish 
to preserve ambient isotopy). Given a virtual knot K, forbidden moves alter the 
Gauss diagram of K in a manner which does not correspond to permitted 
moves and thus cannot be replicated through a continuous deformation; 
furthermore such moves may alter the status of a knot from classical 
to virtual. 
The first two forbidden moves are moving the head of an arrow past another 
arrowhead
\[\includegraphics{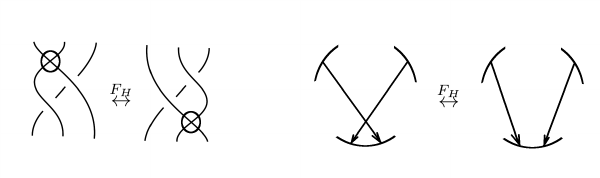}\]
and moving a tail of an arrow past another arrowtail. 
\[\includegraphics{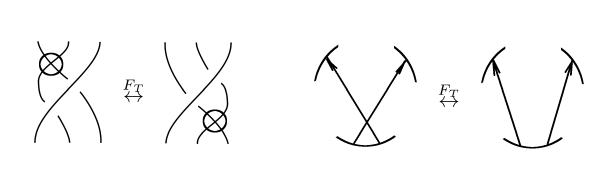}.\]
The other two 
\[\includegraphics{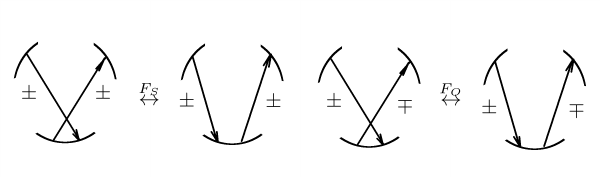}.\]
can be constructed from the first two in the presence of the permitted moves.
In \cite{N} (also see \cite{GPV,K}) it was shown that these forbidden 
moves can completely unknot any knot. However, they may not necessarily 
unlink links.

\section{\large\textbf{Forbidden Quivers}}\label{BB}

In this Section we introduce the forbidden quiver of a link and define new 
polynomial link invariants as an application. We begin by recalling a result 
from \cite{N}.

\begin{theorem} \label{thm1} 
Using both forbidden moves $F_T$ and $F_H$, we can eliminate all 
single-component crossings in a link.
\end{theorem}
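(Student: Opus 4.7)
The plan is to reduce the problem to removing a single single-component arrow at a time, handling that by \emph{isolating} it on its component circle and then applying a Reidemeister~I move. If $\alpha$ is a single-component arrow with endpoints $h_\alpha, t_\alpha$ on a common circle $C$, and $A$ denotes one of the two arcs of $C$ cut out by $h_\alpha$ and $t_\alpha$, then it suffices to show that we can arrange, using $F_H$, $F_T$, and the permitted moves, that no other arrow endpoint lies in $A$. Once this is achieved $\alpha$ is a free kink, $R1$ deletes it, the total count of single-component arrows drops by one, and we iterate.

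The isolation step is an induction on the number $n$ of foreign endpoints in $A$. When $n = 0$ we are done. When $n > 0$, let $e$ be the endpoint of $A$ immediately adjacent to $h_\alpha$. If $e$ is a head, $F_H$ transposes $e$ and $h_\alpha$, pushing $e$ out of $A$ and lowering $n$ by one. If $e$ is a tail, the required swap is a head-past-tail motion, which is accomplished by one of the two derived forbidden moves built from $F_H$, $F_T$, and the permitted moves as alluded to in Section~\ref{RB}; $n$ again drops by one. Symmetrically one may attack from the $t_\alpha$ side, where $F_T$ directly handles an adjacent tail and the other derived move handles an adjacent head; hence both $F_H$ and $F_T$ genuinely play a role.

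The principal obstacle is the derivation of the head-past-tail swap, since $F_H$ and $F_T$ on their own only act on like-type pairs. I would prove it by introducing an auxiliary canceling pair of arrows via a Reidemeister~II move next to the head-tail pair to be swapped, then applying $F_H$ or $F_T$ (according to the local sign configuration) to transpose endpoints across the enlarged arrangement, and finally removing the auxiliary pair by a second Reidemeister~II. Each sign sub-case must be checked separately, and the careful bookkeeping of crossing signs is the combinatorial crux; verifying these cases delivers the full rearrangement freedom needed for the induction above and thus completes the elimination of all single-component crossings.
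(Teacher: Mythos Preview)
The paper does not actually prove this theorem; it is simply recalled from \cite{N} without argument. Your proposed proof is correct and follows precisely the strategy the paper attributes to \cite{N} in the Introduction and in Section~\ref{RB}: once the two derived moves (head-past-tail and tail-past-head) are built from $F_H$, $F_T$, and the permitted moves, arrow endpoints can be slid past one another freely along each circle, so any single-component arrow can be isolated and then deleted by an $R1$ move.
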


Using Theorem \ref{thm1} and Reidemeister II moves, it follows that we can 
reduce any link diagram to include only internally empty circles connected by
sets of parallel arrows of the same sign, possibly in both directions.
For simplicity we will represent $n$ parallel arrows in the same direction
with the same sign as a single arrow with a label of $\pm n$:
\[\includegraphics{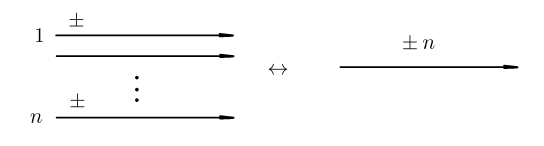}\]

With this in mind, we state our main definition.

\begin{definition}
Let $L$ be a link represented by a Gauss diagram $D$. The \textit{Forbidden
Quiver} of $L$, denoted $\mathcal{FQ}(L)$, is obtained from $D$ by reducing
using forbidden moves and shrinking the circles to vertices. 
\end{definition}

\begin{remark}
The forbidden quiver fundamentally is a quiver with signed arrows. However, 
we can also regard it as a quiver with integer-labeled arrows by replacing
$n$ parallel arrows of the same sign with the same initial and terminal
vertices with a single arrow with the (positive or negative) integer label $n$.
This is well-defined since any pairs of parallel arrows with opposite signs 
have already been canceled in the construction of the forbidden quiver. We
will use both conventions interchangeably.
\end{remark}

\begin{example}
The Borromean rings 
have forbidden quiver consisting of three disconnected vertices:
\[\includegraphics{sn-ss-4.pdf}\]
\end{example}

\begin{example}
The Hopf link has forbidden quiver consisting of a single bigon:
\[\includegraphics{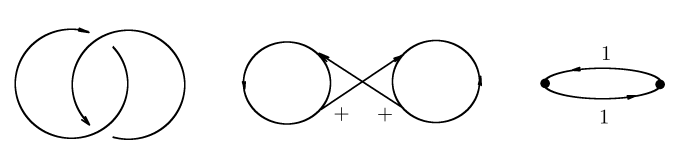}\]
\end{example}

We now come to our main result:

\begin{proposition}
The forbidden quiver of a link is invariant under Reidemeister moves. In 
particular, the isomorphism class of $\mathcal{FQ}(L)$ as a graph with 
signed edges is an invariant of classical and virtual links.
\end{proposition}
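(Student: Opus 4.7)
The plan is to identify the forbidden quiver with data that is manifestly Reidemeister invariant, namely the virtual linking numbers between ordered pairs of components. First I would pin down the reduced form: by Theorem \ref{thm1} all single-component arrows are eliminated, and by applying $F_T$ and $F_H$ to bring opposite-sign parallel arrows between a fixed pair of components into an R2-cancellable configuration, these pairs cancel, leaving between each ordered pair $(i,j)$ of distinct components a packet of $|n_{ij}|$ parallel arrows of common sign $\mathrm{sgn}(n_{ij})$. Thus $\mathcal{FQ}(L)$ is determined up to isomorphism of signed graphs by the integer labels $n_{ij}$ together with the number of vertices.

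Next I would argue that $n_{ij}$ is precisely the virtual linking number $\ell_{ij}$ of the original diagram $D$, defined as the signed count of arrows from circle $i$ to circle $j$. The forbidden moves $F_H$ and $F_T$ merely shuffle arrow endpoints around their circles without creating, destroying, or changing the sign of any arrow, so they preserve these signed counts; R2 cancellations remove one $+$ arrow and one $-$ arrow with matching endpoint components, again preserving them. Hence the quiver data depends only on the matrix $(\ell_{ij})_{i\ne j}$ read off from $D$.

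Finally I would verify directly that $\ell_{ij}$ is preserved under Reidemeister moves: R1 creates or destroys a self-loop arrow, contributing nothing for $i\ne j$; R2 introduces or removes a pair of opposite-sign arrows between the same two component points, for zero net contribution; R3 permutes three crossings without altering which components are involved or their signs; and the virtual moves vI, vII, vIII, v do not affect the Gauss diagram at all. Combined with the identification $n_{ij}=\ell_{ij}$, this shows that the isomorphism class of $\mathcal{FQ}(L)$ is an invariant of classical and virtual links. The main obstacle is the first step: the reduction procedure may be carried out in many orders and may yield syntactically different reduced diagrams, so one must confirm that every reduction sequence terminates in a diagram with the same signed arrow counts; this well-definedness is exactly what the invariance of $\ell_{ij}$ under forbidden moves and R2 cancellations gives us.
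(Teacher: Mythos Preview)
Your argument is correct but takes a different route from the paper. The paper simply checks each Reidemeister move directly against the reduction procedure: R1 arrows are single-component and hence eliminated by Theorem~\ref{thm1}; R2 pairs are opposite-sign parallels and hence cancel; R3 only permutes arrow endpoints along circles that are collapsed to vertices anyway. Your approach instead factors through an invariant you know in advance: you identify the reduced data with the matrix of virtual linking numbers $\ell_{ij}$, observe that the forbidden moves and R2 cancellations preserve these counts (so the reduction is well-defined), and then verify Reidemeister invariance of $\ell_{ij}$ directly. This buys you a cleaner treatment of well-definedness --- the paper's proof implicitly assumes that performing a Reidemeister move and then reducing yields the same quiver as reducing alone, which is not entirely obvious from a direct move-by-move check --- and it anticipates the content of Proposition~\ref{prop:fm}, where the paper itself identifies the forbidden matrix entries with the $lk_{j/k}$. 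The cost is that your proof is less self-contained as a statement about the quiver construction per se, and leans on the classification of fused links by virtual linking numbers rather than establishing invariance from first principles.
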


\begin{proof}
We must check each of the three types of Gauss diagram Reidemeister moves.
Reidemeister I moves do not change the forbidden quiver since the arrows
they introduce are eliminated in $\mathcal{FQ}(L)$. Likewise, the pairs of 
parallel
arrows of opposite sign introducable by Reidemeister II moves are eliminated
in the construction of $\mathcal{FQ}(L)$, whether within or between circles.
Finally, the effect of a Reidemeister III move is to slide the ends of three
arrows forming a triangle past each other, but this has no effect on the 
quiver since the circles are ultimately reduced to vertices.
\end{proof} 

\begin{remark}
We observe that since the forbidden quiver does not detect 
single-component crossing change, the forbidden quiver and the 
invariants defined from it are all invariants of link homotopy.
\end{remark}

A quiver is a category with vertices as objects and paths as morphisms.
It is then natural to ask what simpler invariants we can obtain from the 
forbidden quiver via decategorification. We begin with a matrix-valued 
invariant and a related integer-valued invariant.

\begin{definition}
Let $L$ be an oriented classical or virtual link of $n$ components
and $\mathcal{FQ}(L)$ its forbidden quiver. Then the \textit{forbidden 
matrix} of $L$ is the matrix $\mathcal{FM}(L)=M_n(\mathbb{Z})$ whose 
entry in row $j$ column $k$ is the label on the arrow from vertex 
$j$ to vertex $k$.
\end{definition}

\begin{proposition}\label{prop:fm}
The forbidden matrix of $L$ is an invariant of classical and virtual links
up to conjugation by a permutation matrix.
\end{proposition}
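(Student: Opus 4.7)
The plan is to reduce Proposition \ref{prop:fm} directly to the previous proposition, by observing that $\mathcal{FM}(L)$ is nothing other than the signed (integer-labeled) adjacency matrix of the forbidden quiver $\mathcal{FQ}(L)$, with rows and columns indexed by an enumeration of the components of $L$. Once this is in place, the only ambiguity in writing down $\mathcal{FM}(L)$ is the choice of this enumeration, which corresponds exactly to a choice of labeling of the vertices of $\mathcal{FQ}(L)$.

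First, I would fix a link $L$ and two Gauss diagrams $D,D'$ representing it, together with chosen orderings of the $n$ components. These orderings give labelings $1,\dots,n$ of the vertex sets $V,V'$ of $\mathcal{FQ}(D)$ and $\mathcal{FQ}(D')$, and hence produce matrices $\mathcal{FM}(D),\mathcal{FM}(D')\in M_n(\mathbb{Z})$. By the previous proposition, the forbidden quivers $\mathcal{FQ}(D)$ and $\mathcal{FQ}(D')$ are isomorphic as signed (equivalently, integer-labeled) quivers, so there exists a bijection $\sigma\colon V\to V'$ preserving the labeled-arrow structure. I would emphasize that both changing the diagram $D\mapsto D'$ and changing the chosen ordering of components are handled by the same mechanism, namely a permutation of the vertex labels.

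Next, I would translate the bijection $\sigma$ into matrix language. Writing $P_\sigma$ for the permutation matrix of $\sigma$ (so $(P_\sigma)_{ij}=1$ iff $\sigma(j)=i$), the condition that $\sigma$ takes the arrow-label from vertex $j$ to vertex $k$ in $\mathcal{FQ}(D)$ to the arrow-label from $\sigma(j)$ to $\sigma(k)$ in $\mathcal{FQ}(D')$ is exactly the matrix identity
\[
\mathcal{FM}(D')=P_\sigma\,\mathcal{FM}(D)\,P_\sigma^{-1}.
\]
This is a standard, purely bookkeeping computation: the $(i,\ell)$-entry of $P_\sigma \mathcal{FM}(D) P_\sigma^{-1}$ picks out the $(\sigma^{-1}(i),\sigma^{-1}(\ell))$-entry of $\mathcal{FM}(D)$, which by the isomorphism equals the $(i,\ell)$-entry of $\mathcal{FM}(D')$. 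Hence $\mathcal{FM}(L)$ is well-defined up to conjugation by a permutation matrix.

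The only potential subtlety, and what I would treat as the main thing to be careful about, is the well-definedness of the arrow-labels themselves: one must know that after forbidden-move reduction the number of signed arrows between two given components is unambiguous, so that the matrix entries are intrinsic to $\mathcal{FQ}(L)$ and not to a particular reduced diagram. This, however, is already guaranteed by the construction of $\mathcal{FQ}(L)$ in the previous proposition, where parallel arrows of opposite sign have all been canceled and the remaining count of parallel same-sign arrows is an isomorphism invariant of the signed quiver. With this in hand the proof is complete.
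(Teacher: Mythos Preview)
Your argument is correct, but it proceeds along a different line from the paper's. You deduce the result purely internally: since the previous proposition gives invariance of the isomorphism class of $\mathcal{FQ}(L)$, and $\mathcal{FM}(L)$ is by definition the integer-labeled adjacency matrix of that quiver, the matrix is determined up to simultaneous row/column permutation, i.e.\ conjugation by a permutation matrix. The paper instead proves the proposition externally, by identifying the $(j,k)$-entry of $\mathcal{FM}(L)$ with the virtual linking number $lk_{j/k}$ of \cite{GPV} (the signed count of crossings where component $j$ passes over component $k$) and invoking the known invariance of these numbers. Your route is more self-contained and stays within the quiver framework; the paper's route has the advantage of naming the entries concretely, which is what immediately feeds into the subsequent corollary that classical links have symmetric forbidden matrices (via $lk_{j/k}=lk_{k/j}$). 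If you want your proof to support that corollary as cleanly, you would still need to make the linking-number identification somewhere.
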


\begin{proof}
We observe that the entries in $\mathcal{FM}(L)$ are the virtual 
linking numbers $lk_{j/k}$ as defined in \cite{GPV}, i.e., the sum of 
local writhe numbers at crossings where component $j$ crosses over 
component $k$; it follows that the matrix is invariant up to reordering 
of the vertices, i.e., up to conjugation by a permutation matrix. 
\end{proof}

The proof of Proposition \ref{prop:fm} implies the following corollaries:

\begin{corollary}
A classical link must have a symmetric forbidden linking matrix.
\end{corollary}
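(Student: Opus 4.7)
The plan is to reduce the symmetry claim to the fact that the two virtual linking numbers of a classical two-component link agree. By Proposition \ref{prop:fm} the $(j,k)$ entry of $\mathcal{FM}(L)$ is the virtual linking number $lk_{j/k}$, so what I really need to prove is that $lk_{j/k} = lk_{k/j}$ for every pair of distinct components $j,k$ of a classical link $L$.

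To do this I would fix a classical diagram $D$ of $L$ and restrict attention to the mixed crossings of the two-component sublink $j \cup k$. At each such crossing $x$, let $\epsilon(x)$ denote the usual writhe sign (read off from the ordered tangent frame $(T_{\text{over}}, T_{\text{under}})$), and let $\sigma(x)$ denote the planar crossing sign $\operatorname{sgn}(T_j, T_k)$. A short case analysis shows that $\sigma(x)$ and $\epsilon(x)$ differ by a fixed convention-dependent global constant $\delta \in \{\pm 1\}$ when $j$ passes over $k$, and by $-\delta$ when $k$ passes over $j$. Collecting terms, I expect to obtain
\[
lk_{j/k} - lk_{k/j} \;=\; \delta \sum_{\text{mixed } x} \sigma(x),
\]
whose right-hand side is, up to the fixed sign $\delta$, the algebraic intersection number of the projected oriented curves $j$ and $k$ in the plane.

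The last step is homological: since $D$ is classical, each component projects to a closed oriented curve in $\mathbb{R}^2$, and because $H_1(\mathbb{R}^2) = 0$ we may write $j = \partial C$ for some planar 2-chain $C$, whence the intersection number $j \cdot k = (\partial C) \cdot k$ equals zero by $\partial k = 0$. Substituting back gives $lk_{j/k} = lk_{k/j}$, so $\mathcal{FM}(L)$ is symmetric. I do not anticipate a serious obstacle; the only point requiring real care is the four-way sign bookkeeping relating $\epsilon(x)$ to $\sigma(x)$, after which everything collapses to the one-line homological observation above. An alternative route via a Seifert surface $\Sigma_j$ for the component $j$, yielding $lk_{k/j} = \Sigma_j \cdot k = lk(j,k) = j \cdot \Sigma_k = lk_{j/k}$, is available and might be preferred by readers who would rather avoid the planar sign computation altogether.
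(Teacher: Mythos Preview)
Your proposal is correct and rests on exactly the same core observation as the paper: by Proposition~\ref{prop:fm} the $(j,k)$ entry of $\mathcal{FM}(L)$ is $lk_{j/k}$, and for a classical link one has $lk_{j/k}=lk_{k/j}$. The paper's proof is literally that one sentence and treats the equality as a known fact about classical linking numbers; you go further and actually supply an argument for it (via the vanishing of the algebraic intersection number of two closed curves in $\mathbb{R}^2$, or alternatively via Seifert surfaces). So at the level of strategy your route coincides with the paper's, but it is strictly more detailed---which is a genuine addition, since the paper's version is essentially an appeal to folklore rather than a proof.
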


\begin{proof}
In any classical link, $lk_{j/k}=lk_{k/j}$ for all $j,k$.
\end{proof}

\begin{definition}
The \textit{forbidden determinant} of a link is the determinant of the 
forbidden matrix.
\end{definition}

\begin{corollary}
The forbidden determinant of a link is an invariant of links and an 
invariant of link homotopy.
\end{corollary}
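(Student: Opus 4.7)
The plan is to deduce both statements directly from the already-established properties of the forbidden matrix, so very little new work is needed. By Proposition \ref{prop:fm}, the forbidden matrix $\mathcal{FM}(L)$ is well-defined up to conjugation by a permutation matrix; that is, any two choices of vertex ordering produce matrices related by $\mathcal{FM}(L)' = P\,\mathcal{FM}(L)\,P^{-1}$ for some permutation matrix $P$. The first step is simply to observe that the determinant is a conjugation invariant: $\det(PAP^{-1}) = \det(P)\det(A)\det(P)^{-1} = \det(A)$, so the scalar $\det \mathcal{FM}(L)$ does not depend on the chosen ordering of the components. Combined with Proposition \ref{prop:fm}, this gives ambient isotopy invariance of the forbidden determinant.

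For the link homotopy statement, I would appeal to the remark following the Reidemeister-invariance proposition, which records that the forbidden quiver is insensitive to single-component crossing change and hence is a link homotopy invariant. Since the forbidden matrix is constructed from the forbidden quiver by reading off edge labels, and the forbidden determinant is in turn a function of the forbidden matrix, link homotopy invariance is inherited. Equivalently, one can note that a single-component crossing change alters only self-crossings of one component, which affect no entry $lk_{j/k}$ with $j \neq k$; since the diagonal entries of $\mathcal{FM}(L)$ are zero (all self-crossings are eliminated in forming the forbidden quiver), no entry of $\mathcal{FM}(L)$ is changed at all.

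The main obstacle, as far as there is one, is just to confirm the second point cleanly: one must verify that the construction of $\mathcal{FM}(L)$ really does set all diagonal entries to zero (so that a single-component crossing change cannot smuggle in a contribution), and that the permutation-matrix ambiguity is the only ambiguity, both of which are immediate from the preceding results. Everything else reduces to the one-line determinant identity above.
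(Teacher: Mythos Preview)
Your proposal is correct and matches the paper's intended argument: the paper does not give a separate proof of this corollary, simply noting that it follows from Proposition~\ref{prop:fm}, and you have spelled out exactly the details that are implicit there (conjugation-invariance of the determinant for the isotopy part, and the earlier remark on single-component crossing change for the link homotopy part).
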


\begin{example}
The Hopf link has forbidden matrix 
\[
\mathcal{FM}(L2a1)=
\left[\begin{array}{rr}
0 & 1 \\
1 & 0
\end{array}\right]
\]
and hence forbidden determinant $-1$.
\end{example}

Next, we define two polynomial invariants of oriented classical and virtual 
links from the forbidden quiver via decategorification.

\begin{definition}
Let $L$ be an oriented classical or virtual link and $\mathcal{FQ}(L)$ its
forbidden quiver. Then the \textit{forbidden polynomial} of $L$ is obtained 
by summing over the edges of the quiver a formal variable $x$ to the power of
the integer weight of the edge
\[\Phi_{\mathcal{F}}(L)=\sum_{e\in E(\mathcal{FQ}(L))} x^{w(e)}.\]
\end{definition}

\begin{definition}
Let $L$ be an oriented classical or virtual link and $\mathcal{FQ}(L)$ its
forbidden quiver. Then the \textit{forbidden 2-variable polynomial} of $L$ 
is obtained by summing over each vertex a product of variables $x$ and $y$ 
to the power of the signed in-degree and signed out-degree of each vertex,
\[\Phi^2_{\mathcal{F}}(L)=\sum_{v\in V(\mathcal{FQ}(L))} 
x^{\mathrm{deg}^+(v)}y^{\mathrm{deg}^-(v)}.\]
\end{definition}

\begin{example}
The Hopf link has forbidden polynomial $\Phi_{\mathcal{F}}(L2a1)=2x$
and forbidden 2-variable polynomial $\Phi^2_{\mathcal{F}}(L2a1)=2xy$.
\end{example}

\begin{example}
The \textit{virtual Hopf link} has forbidden polynomial 
$\Phi_{\mathcal{F}}(2.1)=x$ and forbidden 2-variable polynomial 
$\Phi^2_{\mathcal{F}}(2.1)=x+y$.
\[\includegraphics{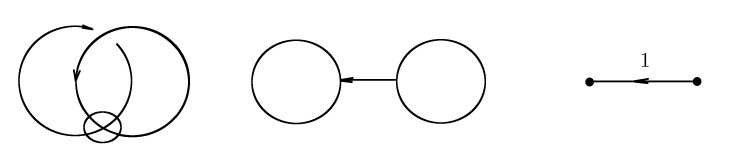}\]
\end{example}

\begin{example}
We computed the forbidden polynomials for each of the prime classical knots
in the Thistlethwaite link table as found at \cite{KA} for a choice of 
orientation for each link. The results are in the table.
\[\begin{array}{r|ll}
L & \Phi_{\mathcal{F}}(L) & \Phi^2_{\mathcal{F}}(L) \\ \hline
L2a1 & 2x & 2xy\\ 
L4a1 & 2x^2 &2x^2y^2\\
L5a1 & 0 & 0\\
L6a1 & 2x^2 & 2x^2y^2\\ 
L6a2 & 2x^3 & 2x^3y^3\\ 
L6a3 & 2x^3 & 2x^3y^3\\ 
L6a4 & 0 & 3 \\ 
L6a5 & 6x & 3x^2y^2\\ 
L6n1 & 6x & 3x^2y^2 \\
\end{array} \quad
\begin{array}{r|ll}
L & \Phi_{\mathcal{F}}(L) & \Phi^2_{\mathcal{F}}(L) \\ \hline
L7a1 & 0 & 0 \\ 
L7a2 & 2x^2 & 2x^2y^2\\ 
L7a3 & 0 & 0 \\ 
L7a4 & 0 & 0 \\ 
L7a5 & 2x & 2xy \\ 
L7a6 & 2x & 2xy \\ 
L7a7 & 4x+2x^{-1} & x^2y^2 + 2\\ 
L7n1 & 2x^{-2} & 2x^{-2}y^{-2} \\
L7n2 & 0 & 0 \\
\end{array}.\]
\end{example}

While our first two decategorifcation polynomials could be reasonably
motivated directly from the virtual linking numbers without our quiver
construction, our next polynomial decategorifcation makes more use of the
quiver structure. 

\begin{definition}
Let $L$ be a classical or virtual link with forbidden quiver 
$\mathcal{FQ}(L)$. We define the \textit{forbidden maximal path polynomial}
of $L$ to the sum over the set of all maximal non-repeating paths $p$ in 
$\mathcal{FQ}(L)$ of terms $x^{\sum_{e\in p}\epsilon(e)}y^{|p|}$ recording the
total sign and path length, i.e.,
\[\Phi_F^{MP}(L)=\sum_{p\in MP}x^{\sum_{e\in p}\epsilon(e)}y^{|p|}.\]
\end{definition}

As previously mentioned, in a quiver considered as a category, paths are the 
morphisms. 
Each path connects two vertices, one at which it starts, and one at which it 
ends. Two paths in a quiver may be composed if the second starts at the 
endpoint of the first path. Non-overlapping paths do not contain any repeated 
subpaths. Maximal non-overlapping paths are paths such that, on the given 
quiver, there is no path that may be composed with it without resulting in a 
repeated subpath. The sum over a certain path is the sum of the coefficients 
of the minimal paths it contains.

\begin{proposition}
The forbidden maximal path polynomial is an invariant of classical isotopy,
virtual isotopy and link homotopy classes of links.
\end{proposition}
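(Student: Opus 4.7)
The plan is to reduce the invariance of $\Phi_F^{MP}(L)$ to the invariance of the forbidden quiver itself, which has already been established in the earlier proposition together with the remark on link homotopy. In other words, once we know that $\mathcal{FQ}(L)$, as a signed quiver up to isomorphism, is an invariant of classical isotopy, virtual isotopy, and link homotopy, it suffices to check that $\Phi_F^{MP}(L)$ depends only on the isomorphism class of $\mathcal{FQ}(L)$ as a signed quiver.

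First I would pin down a precise convention: work with $\mathcal{FQ}(L)$ in the signed-arrow (multigraph) form, so each parallel arrow is an individual directed edge $e$ carrying a sign $\epsilon(e)\in\{+1,-1\}$. A path $p$ is then a composable sequence of directed edges, the length $|p|$ is the number of edges in the sequence, and the exponent $\sum_{e\in p}\epsilon(e)$ is the signed edge-count along $p$. The conditions ``non-repeating'' and ``maximal'' — as described in the paragraph preceding the proposition — are phrased entirely in terms of the directed multigraph structure (no repeated subpath, and no edge can be prepended or appended without creating one), so they depend only on $\mathcal{FQ}(L)$ up to signed-quiver isomorphism.

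Next I would observe that any isomorphism $\varphi:\mathcal{FQ}(L)\to\mathcal{FQ}(L')$ of signed quivers is by definition a bijection on vertices and a bijection on edges preserving source, target, and sign. It therefore carries paths to paths while preserving length and total sign, and in particular induces a bijection on the set $MP$ of maximal non-repeating paths. Consequently the two sums
\[\sum_{p\in MP(\mathcal{FQ}(L))} x^{\sum_{e\in p}\epsilon(e)}\, y^{|p|} \quad \text{and} \quad \sum_{p\in MP(\mathcal{FQ}(L'))} x^{\sum_{e\in p}\epsilon(e)}\, y^{|p|}\]
agree term for term. Combined with the quiver invariance, this gives $\Phi_F^{MP}(L) = \Phi_F^{MP}(L')$ whenever $L$ and $L'$ are related by Reidemeister moves or by single-component crossing changes.

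The only real delicacy — and the place where I would be most careful — is fixing the interpretation of ``maximal non-repeating path'' unambiguously, since the informal description in the preceding paragraph admits slight variants (vertex-simple, edge-simple, or subpath-simple). The proof above does not depend on which variant is chosen; what matters is only that the defining property is intrinsic to the signed quiver, so that every signed-quiver isomorphism preserves $MP$. Once this is granted, the proposition follows immediately from the already-established invariance of $\mathcal{FQ}(L)$.
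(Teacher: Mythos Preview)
Your argument is correct and follows the same approach as the paper, which simply notes that the result follows immediately from the invariance of the forbidden quiver. You have merely spelled out the implicit step that $\Phi_F^{MP}$ depends only on the isomorphism class of $\mathcal{FQ}(L)$ as a signed quiver, which the paper takes as self-evident.
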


\begin{proof}
This follows immediately from the fact that the quiver is invariant.
\end{proof}

\begin{example}
Consider the virtual link $L$ and its forbidden quiver:
\[\includegraphics{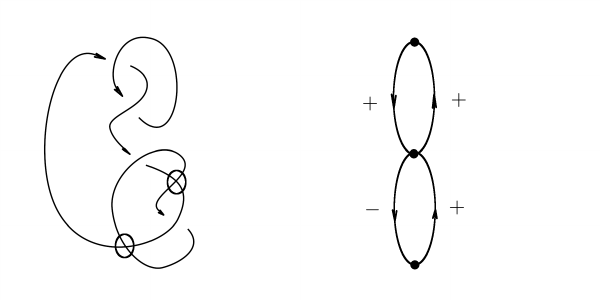}\]
Then there are six maximal non-repeating paths as shown:
\[\includegraphics{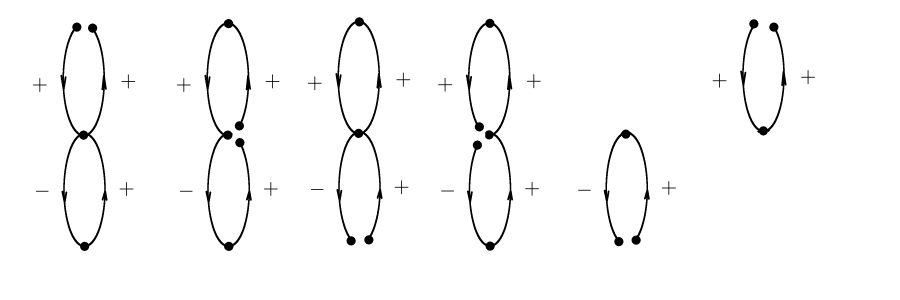}\]
and in particular we obtain forbidden maximal path polynomial
\[\Phi_F^{MP}(L)=4x^2y^4+x^2y^2+y^2.\]
\end{example}

A natural question is to ask which quivers can be obtained as the
forbidden quiver of a link. We first observe that loops are not possible
in a forbidden quiver since they would represent single-component crossings
which get eliminated by Reidemeister I moves. As it turns out, for virtual
links, having no loops is the only necessary condition.

\begin{theorem}
Let $\Gamma$ be any finite signed quiver without loops. Then $\Gamma$ is the 
forbidden quiver of a virtual link $L$.
\end{theorem}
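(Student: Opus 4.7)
The plan is to construct a Gauss diagram directly from $\Gamma$ and observe that it is already in the reduced form produced by the forbidden-quiver construction. The main work is to identify the correct normal form and to invoke the fact that every combinatorial Gauss diagram represents some virtual link.

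First, I would reduce $\Gamma$ to a canonical presentation: for each ordered pair of distinct vertices $(v_j, v_k)$, combine all $\Gamma$-edges from $v_j$ to $v_k$ into a single integer weight $w_{jk}$ by summing signs (a positive edge contributes $+1$, a negative edge contributes $-1$), using the integer-label convention from the Remark following the definition of $\mathcal{FQ}$. The no-loops hypothesis means there is nothing to consider when $j=k$. Then I would build a Gauss diagram $D_\Gamma$ on $n$ disjoint circles indexed by the vertices: for each $(j,k)$ with $w_{jk}\neq 0$, place $|w_{jk}|$ parallel same-sign arrows from circle $j$ to circle $k$ with sign $\mathrm{sgn}(w_{jk})$, grouped so that no other arrowhead or arrowtail lies between their heads or between their tails. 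Since a Gauss diagram is purely combinatorial data (planar realizability is exactly what virtual crossings sidestep), $D_\Gamma$ determines a virtual link $L$.

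It then remains to check that $\mathcal{FQ}(L)\cong\Gamma$. By construction $D_\Gamma$ has no single-component chords (because $\Gamma$ is loop-free) and no cancelling opposite-sign parallel pairs between any two circles (by the initial reduction), so the forbidden-move reduction is already complete, and shrinking each circle to a vertex recovers $\Gamma$ exactly. Equivalently, by Proposition \ref{prop:fm}, the $(j,k)$-entry of $\mathcal{FM}(L)$ is the virtual linking number $lk_{j/k}$, which by direct count on $D_\Gamma$ equals $w_{jk}$; combined with the fact that loops cannot occur in any forbidden quiver, this identifies $\mathcal{FQ}(L)$ with $\Gamma$.

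There is no serious technical obstacle — the only conceptual point to be careful about is that one must not expect the symmetry $lk_{j/k}=lk_{k/j}$ that characterizes the classical case (see the Corollary above). The whole content of the statement is that in the virtual category this symmetry disappears, so an arbitrary integer matrix with zero diagonal, and hence an arbitrary loop-free finite signed quiver, is realizable by the construction above.
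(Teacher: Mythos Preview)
Your proposal is correct and is essentially the same construction as the paper's, carried out on the Gauss-diagram side rather than the virtual-link-diagram side: the paper replaces each vertex by a circle and each signed arrow by an explicit virtual tangle, inserting virtual crossings wherever arrows cross, which is exactly the link whose Gauss diagram is your $D_\Gamma$. Your write-up is in fact a bit more careful than the paper's in noting that the resulting diagram is already in forbidden-reduced form and hence has forbidden quiver $\Gamma$.
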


\begin{proof}
Replacing the vertices with circles oriented counterclockwise, we can replace 
the arrows with the virtual tangles as shown
\[\includegraphics{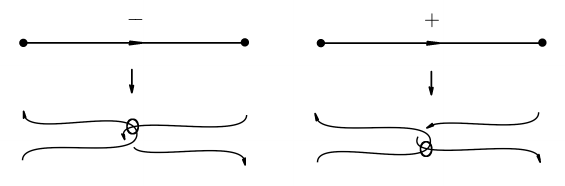}\]
with any arrow crossings replaced with virtual crossings. The
resulting virtual link then has forbidden quiver $\Gamma$.
\end{proof}

\begin{example}
From the quiver below we construct a virtual link which has the given 
quiver as its forbidden quiver.
\[\includegraphics{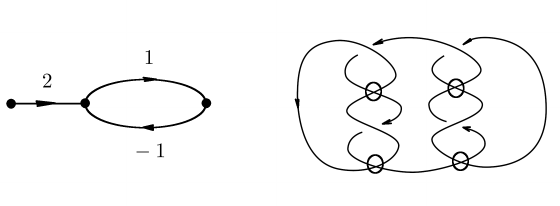}\]
\end{example}

\section{\large\textbf{Questions}}\label{Q}

While every quiver without loops is the forbidden quiver of a virtual link,
it is not as immediately clear which of these virtual links are equivalent 
to classical links.

What other information about the virtual isotopy class and link homotopy class
of a virtual link is contained in the forbidden quiver?

Given any quiver without loops, we obtain a family of virtual links with
the given forbidden quiver. What kinds of functors from the small category 
represented by the quiver into the category of virtual links or other 
categories can be obtained in this way?

\bigskip

\bibliographystyle{abbrv}
\bibliography{sn-ss}

\noindent
\textsc{Department of Mathematical Sciences \\
Claremont McKenna College \\
850 Columbia Ave. \\
Claremont, CA 91711}

\end{document}